\documentclass[11pt]{amsart}
\usepackage[T1]{fontenc}
\usepackage{lmodern,amsmath,amssymb,amsthm,microtype}
\usepackage{needspace}
\usepackage[margin=1in]{geometry}
\usepackage[hidelinks]{hyperref}
\hypersetup{pdftitle={Linearity of graph-manifold groups}}
\newtheorem{theorem}{Theorem}
\newtheorem{lemma}[theorem]{Lemma}
\newtheorem{corollary}[theorem]{Corollary}
\newtheorem{remark}[theorem]{Remark}
\DeclareMathOperator{\GL}{\mathsf{GL}}
\DeclareMathOperator{\M}{\mathsf{M}}
\DeclareMathOperator{\SL}{\mathsf{SL}}

\newcommand{\R}{\mathbb R}

\newcommand{\Q}{\mathbb Q}
\newcommand{\Z}{\mathbb Z}
\newcommand{\K}{\mathbb K}

\title{Linearity of graph-manifold groups}
\author[D.~T.~Wise]{Daniel T. Wise}
\email{daniel.wise@weizmann.ac.il}
\address{Dept. of Mathematics\\
Weizmann Institute of Science}

\dedicatory{to Peter Shalen on his 80th}
\date{\today}
\begin{document}

\begin{abstract}
We complete the proof that every compact $3$-manifold group is linear. Specifically, we construct a faithful representation in $\GL_5(\Q[u])$ for a
finite-index subgroup of every closed
non-geometric graph-manifold group.
\end{abstract}

\thanks{The author used AI to extract Lemma~\ref{lem:degree},
which simplified the proof of Theorem~\ref{thm:product}, to generate the example in Section~\ref{sec:example}, and to polish the exposition.
The author takes responsibility for the text.}

\maketitle

\vspace{-1cm}
\section{Introduction}
A \emph{graph manifold} is a compact, connected,
irreducible $3$--manifold whose JSJ pieces are Seifert fibered.
We prove that every closed graph-manifold group is $\R$-linear.
Together with the known geometric and mixed cases, this resolves the
linearity problem for compact $3$-manifold groups
\cite[prob~3.37(a)]{K3}. Thurston's original question asks for an
embedding in $\GL_4(\R)$ \cite[prob~3.33]{Kirby} but Button \cite{Button} exhibited
closed graph-manifold groups that do not embed in $\GL_4(\K)$ for any field $\K$. Here the dimension depends on the manifold.

Linearity was already known for the $3$-manifolds carrying one of
Thurston's homogeneous geometries. Przytycki-Wise proved virtual
specialness, and hence $\Z$-linearity, for mixed $3$-manifold groups
\cite[thm~1.1 and cor~1.2]{PWmixed} and for graph manifolds
with nonempty boundary \cite{PWboundary}. Liu proved that an aspherical
compact graph manifold has virtually special $\pi_1$ exactly
when it admits a nonpositively curved metric
\cite[thm~1.1]{Liu}. The remaining case was therefore that of
closed non-geometric graph manifolds without such a metric.

%
\begin{theorem}\label{thm:product}
Let $M$ be a closed connected $3$-manifold decomposed along tori into
finitely many product pieces $\Sigma_v\times S^1$. Assume each
$\Sigma_v$ is a compact connected orientable surface of positive genus
with a positive even number of boundary components, and the two
fiber slopes are distinct on every gluing torus. Then there is a
faithful representation
$\pi_1M\longrightarrow\GL_5(\Q[u])$
where $u$ is an indeterminate. In particular, $\pi_1M$ embeds in
$\GL_5(\R)$.
\end{theorem}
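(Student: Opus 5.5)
Here is the plan. I would build the representation piece by piece and assemble it over the graph of groups. Write $\pi_1M$ as the fundamental group of a graph of groups whose vertex groups are $G_v=\pi_1\Sigma_v\times\langle t_v\rangle$, with $t_v$ the fiber, and whose edge groups are $\Z^2$. Since each $\Sigma_v$ has nonempty boundary, $\pi_1\Sigma_v$ is free. I would then choose, for each vertex, a representation $\rho_v\colon G_v\to\GL_5(\Q[u])$ with three properties. First, $t_v$ maps to a diagonalizable matrix of infinite order that is central in $\rho_v(G_v)$; concretely it should be block scalar, for instance $\mathrm{diag}(u^{a},u^{a},u^{b},u^{b},u^{c})$, or a transvection-type central unipotent. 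Second, $\pi_1\Sigma_v$ maps faithfully into the centralizer of $\rho_v(t_v)$, which contains a copy of something like $\GL_2\times\GL_2\times\GL_1$. Third, each boundary curve maps to a specified element. The evenness hypothesis is what lets us pair up the boundary components: on an orientable surface with an even number of boundary curves, we can prescribe the images of the boundary curves, for example as alternately $\pm$ conjugates of a fixed unipotent, subject to the relation $\prod [a_i,b_i]\prod c_j=1$. Positive genus supplies the commutator room needed to realize an arbitrary product.

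Next comes the gluing. At each torus, the two sides give two bases $(t_v,c)$ and $(t_w,c')$ of $\Z^2$, and distinct fiber slopes mean that $t_w$ is not $\pm t_v$. I would normalize every edge group to land in a single fixed abelian subgroup $A\cong\Z^2$ of $\GL_5(\Q[u])$, for example diagonal matrices $\mathrm{diag}(u^{m},u^{m},u^{n},u^{n},1)$ acting by monomials in $u$. The edge identification is then an element of $\GL_2(\Z)$ acting on exponents. For each vertex we then choose $\rho_v$ so that the images of $t_v$ and of the peripheral curves match these monomial matrices, and we conjugate the vertex representations so that they agree on edge groups. This yields a homomorphism $\rho\colon\pi_1M\to\GL_5(\Q[u])$.

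The main obstacle is faithfulness, and I would handle it with a ping-pong / degree argument in the $u$-adic (or $u^{-1}$-adic) valuation. Here I expect Lemma~\ref{lem:degree} to be used: a reduced word alternating between vertex groups and non-edge elements has image whose matrix entries have strictly growing $u$-degree, so the image is nontrivial. To make this work, I would arrange two things. Nonperipheral elements of each $\rho_v(\pi_1\Sigma_v)$ should be "generic" relative to the monomial torus; I would achieve this by conjugating each vertex representation by a matrix with transcendental-like parameters (such as specializations of $u$, or by powers of a fixed unipotent). The central fiber directions of adjacent pieces should also be transverse, which is exactly where distinct slopes enter. One then checks that multiplying by a vertex element outside the edge group strictly increases the leading degree in some coordinate while preserving a "dominant cone". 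Normal forms in the graph of groups then give injectivity.

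Finally, since $\Q[u]$ embeds in $\R$ by sending $u$ to a transcendental number, we obtain the embedding into $\GL_5(\R)$.
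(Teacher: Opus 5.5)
There is a genuine gap, and it is in the gluing step. You send every edge group into the monomial torus $\{\mathrm{diag}(u^m,u^m,u^n,u^n,1)\}$. In a faithful representation, every nontrivial peripheral element would then be diagonalizable with an eigenvalue $u^k$, $k\neq0$. For non-NPC manifolds, which are exactly the new cases of the theorem, this is impossible. It is the difficulty with diagonal peripheral representations that the introduction attributes to Button.

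Here is a direct check on the example of Section~\ref{sec:example}, which satisfies the hypotheses. There $[a,b]c_1c_2=1$, $z$ is central in $H$, and $tc_2t^{-1}=c_1^{-1}$, $tzt^{-1}=c_1z$. Fix a valuation $v$, e.g.\ the $u$-adic one. For an arbitrary representation $\rho$, let $W$ be the multiset of pairs $(\alpha,\beta)=(v(\lambda),v(\mu))$, where $(\lambda,\mu)$ runs over the joint eigenvalues of $(\rho(c_1),\rho(z))$ with multiplicity. Conjugation by $\rho(t)$ carries $\langle c_2,z\rangle$ to $\langle c_1,z\rangle$, so the multiset for $(\rho(c_2),\rho(z))$ is $\{(-\alpha,\alpha+\beta)\}$. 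Since $\rho(z)$ centralizes $\rho(H)$, the sum $V_\gamma$ of its generalized eigenspaces of valuation $\gamma$ is $\rho(H)$-invariant. On it, $\det\rho(c_1c_2)|_{V_\gamma}=\det\rho([a,b])^{-1}|_{V_\gamma}=1$. Hence
\[
\sum_{(\alpha,\beta)\in W,\ \beta=\gamma}\alpha\;=\sum_{(\alpha,\beta)\in W,\ \alpha+\beta=\gamma}\alpha\qquad\text{for every }\gamma .
\]
Multiplying by $\gamma$ and summing gives $\sum_W\alpha^2=0$. So every eigenvalue of $\rho(c_1)$ has valuation $0$, which in your torus forces $\rho(c_1)=I$. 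No such representation of this $\pi_1M$ is faithful.

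Two further points about the same step. Conjugation preserves the weight multiset $\{m,m,n,n,0\}$, so a gluing matrix can only realize the identity or the swap on exponents, not an arbitrary element of $\GL_2(\Z)$. Your alternative of a central unipotent fiber does not fit in a monomial torus at all.

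The missing idea is that peripheral tori must act unipotently. The paper makes every peripheral matrix $I+D$ with $D^2=0$. It lets the local fiber act only on the Heisenberg block $\langle w_1,w_3\rangle$ and the neighboring fiber only on the holonomy block $U_v$. This is where $p_e\neq0$, the values $r_e$, and the evenness hypothesis (through Lemma~\ref{lem:holonomy}) are used. It then glues by the matrix $C_e$ exchanging these two blocks.

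The faithfulness step is also only a sketch, and as set up it cannot invoke Lemma~\ref{lem:degree}. That lemma needs the matrices between consecutive edge letters to be constant, and the leading coefficients to be rank one with nonzero pairings. Your vertex images already involve $u$, and ``generic conjugation'' or a ``dominant cone'' supplies neither property.

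In the paper the vertex representations are defined over $\Q$, and $u$ enters only through the shear $X_e=(I+\varepsilon_eux_e\phi_e)C_e$. This shear commutes with $\rho_v(P_e)$ because $x_e$ and $\phi_e$ are fixed by $\rho_v(P_e)$ and $\phi_ex_e=0$, which again depends on the peripheral images being unipotent. The decisive nondegeneracy is \eqref{eq:transition}: $\phi_{\bar e}\rho_v(h)x_d=0$ exactly when $ehd$ is a pinch. This follows from the detection property \eqref{eq:detection} of the surface holonomy, using malnormal, pairwise nonconjugate boundary subgroups. That condition is what makes each edge traversal in a reduced word raise the degree by exactly one, and your proposal has no substitute for it.
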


We prove Theorem~\ref{thm:product} in
Sections~\ref{sec:preliminaries}-\ref{sec:faithfulness}.
In Section~\ref{sec:manifolds} we give the finite-cover reduction
and deduce linearity for all compact $3$-manifolds.
The construction has three steps. Surface holonomy and a Heisenberg
summand give faithful vertex representations in dimension $2+3$.
On each boundary torus, the two fiber directions act on separate
2-dimensional summands, and exchanging these summands matches the gluing.
Finally, a polynomial shear makes each edge traversal in a reduced word
increase the degree by one, proving faithfulness.
We work through an example in Section~\ref{sec:example}.

The faithfulness argument uses Shalen’s leading-coefficient method
\cite[prop~1.3]{Shalen}. This work was motivated by our application of Shalen's method to  tubular groups
\cite{Wise_Linear_Tubular}. Douba-Tsouvalas give related combination
criteria using a polynomial unipotent shear
\cite[thm~2.1]{DT} and block matrices
\cite[thm~6]{DTtrans}. 
Button explored Shalen's approach
for graph manifolds and identified the difficulty of matching diagonal
peripheral representations in the closed case \cite[\S7]{Button}.

Douba proved that if a closed orientable irreducible non-Seifert
graph-manifold group admits a faithful $\R$-linear representation
without nonidentity unipotents, then the manifold admits a
nonpositively curved metric \cite[thm~1.3]{DoubaCAT}.
His earlier result for graph manifolds with one JSJ torus and
product pieces \cite[thm~1.8]{DoubaVU} uses an argument similar
to Button's for Gersten's group \cite[thm~4.5]{ButtonUnipotent}.

\section{Surface holonomy}\label{sec:preliminaries}

\begin{lemma}[Surface holonomy]\label{lem:holonomy}
Let $\Sigma$ be a compact connected orientable surface of genus $g\ge1$
with an even number $m\ge2$ of boundary components, represented by
boundary loops $c_1,\ldots,c_m$ in $F=\pi_1\Sigma$.
There is a faithful representation
$j:F\to\SL_2(\Z)$ with each $j(c_i)$ nontrivial unipotent.
\end{lemma}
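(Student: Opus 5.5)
The plan is to obtain $j$ as the holonomy of a finite cover of the once-punctured torus, and then correct signs on the cusps with a character $F\to\{\pm I\}$. Let $T_0$ be the once-punctured torus. Its fundamental group $\langle a,b\rangle$ embeds in $\SL_2(\Z)$ via
\[
a\mapsto A=\begin{pmatrix}1&1\\1&2\end{pmatrix},\qquad b\mapsto B=\begin{pmatrix}1&-1\\-1&2\end{pmatrix}.
\]
This is the commutator subgroup of $\SL_2(\Z)$, a free group of rank $2$ that does not contain $-I$, and $[A,B]$ is a nontrivial element of trace $-2$, that is, $-U$ for a nontrivial unipotent $U$.

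Set $d=2g-2+m$; since $m$ is even, $d$ is even. A connected degree-$d$ cover $\Sigma'\to T_0$ has $\chi(\Sigma')=-d=2-2g-m$. If it has exactly $m$ cusps, it is homeomorphic to the interior of $\Sigma$, so $F\cong\pi_1\Sigma'$ with the $c_i$ corresponding to the cusp loops. Such covers correspond to pairs $\alpha,\beta\in S_d$ generating a transitive subgroup with $[\alpha,\beta]$ having exactly $m$ cycles. A cusp over a cycle of length $k_i$ has peripheral element conjugate to $[a,b]^{k_i}$. Restricting the embedding above gives a faithful $j_0:F\to\SL_2(\Z)$ with $j_0(c_i)$ conjugate to $(-U)^{\pm k_i}=(-1)^{k_i}U^{\pm k_i}$, which is nontrivial and either unipotent or minus a unipotent.

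The key step, and the one I expect to be the main obstacle, is the existence of such permutations for every $g\ge1$ and every even $m\ge2$. I would construct them explicitly. First, $g=1$, $m=1$, $d=1$ is trivial. Then I would build the general case inductively by a cut-and-paste on permutations. Gluing in a handle raises the degree by $2$ and $g$ by $1$. A second move adds two cusps, raising the degree by $2$. I would realise each move by an explicit modification of $(\alpha,\beta)$ on two new letters and check the cycle structure of the new commutator by hand. As a fallback I would cite the classical count of commutators in $S_d$ via Frobenius' formula: every even permutation of $S_d$ is a commutator, and transitivity can be arranged. Together these give the required cycle types, with $\sum k_i=d$.

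Finally, I fix the signs. The group $F$ is free with a basis containing $c_1,\dots,c_{m-1}$ and $2g$ further elements, and $c_m$ is, up to orientation, the product of commutators times $(c_1\cdots c_{m-1})^{-1}$. So a character $\varepsilon:F\to\{\pm1\}$ can take arbitrary prescribed values on $c_1,\dots,c_{m-1}$, and $\varepsilon(c_m)=\prod_{i<m}\varepsilon(c_i)$ is then forced. I want $\varepsilon(c_i)=(-1)^{k_i}$ for all $i$. This is consistent because
\[
\prod_{i=1}^m(-1)^{k_i}=(-1)^d=1 .
\]
This is exactly where the evenness of $m$, and hence of $d$, is used.

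Set $j=\varepsilon\, j_0$; it is a homomorphism since $\pm I$ is central. Each $j(c_i)$ is then a nontrivial unipotent. For faithfulness, suppose $j(w)=I$. Then $j_0(w)=\pm I$. Since $-I\notin j_0(F)$, we get $j_0(w)=I$, and so $w=1$.
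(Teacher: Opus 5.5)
Your proposal is correct and follows the paper's proof essentially step for step. Both pull back the faithful modular-torus representation to a cover of degree $2g-2+m$, observe that exactly the cusps of odd width have trace $-2$, and flip those signs by a $\{\pm1\}$-character, which is consistent because the degree is even. Both then deduce faithfulness from $-I\notin j_0(F)$. The paper simply asserts that the required cover exists.

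One caution on your sketch of that existence step: starting from $(g,m)=(1,1)$ and adding cusps two at a time only ever produces odd $m$. Instead take $\alpha=(1\,2\,\cdots\,d)$, which makes transitivity automatic. Then choose $\beta$ conjugating $\alpha^{-1}$ to the permutation $\alpha^{-1}(1\,2\,\cdots\,2g{-}1)^{-1}$, which a direct check shows is a $d$-cycle. This makes $[\alpha,\beta]$ a $(2g-1)$-cycle with $m-1$ fixed points, as required.
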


\begin{proof}
Let $T$ be a torus with one boundary component and
$\pi_1T=\langle a,b\rangle$.
The standard faithful modular-torus representation $j_0$ is given by
\[
j_0(a)=A=\begin{pmatrix}2&1\\1&1\end{pmatrix}
\ \text{ and } \
j_0(b)=B=\begin{pmatrix}2&-1\\-1&1\end{pmatrix}
\ \text{ so } \ \ \  
[A,B]=-\begin{pmatrix}1&6\\0&1\end{pmatrix}.
\]

There is a finite covering $\Sigma\to T$, and pulling back $j_0$
gives a faithful representation $\rho$.
A boundary image has trace $-2$ precisely when its covering degree
is odd. Since the total covering degree is $2g+m-2$, which is even,
an even number of boundary images have trace $-2$.

Using a standard surface presentation, multiply these boundary
matrices by $-I$, leaving the handle images unchanged.
The signs cancel in the surface relation, so this defines a
representation $j$. And it remains faithful since if $j(f)=I$, then
$\rho(f)^2=I$, hence $f^2=1$, and therefore $f=1$ since $F$ is
torsion-free. Every boundary image is now nontrivial unipotent.
\end{proof}

Choose nonzero rational vectors $\ell_i\in\ker(j(c_i)-I)$.
The boundary subgroups $\langle c_i\rangle$ are maximal cyclic
and pairwise nonconjugate in $F$.
If $\det(\ell_i,j(f)\ell_k)=0$, the unipotents
$j(c_i)$ and $j(fc_kf^{-1})$ have the same fixed line and hence
commute. Faithfulness and cyclic centralizers in $F$ give
$f\langle c_k\rangle f^{-1}=\langle c_i\rangle$.
Pairwise nonconjugacy and malnormality therefore imply
$i=k$ and $f\in\langle c_i\rangle$.
The converse is immediate, so
\begin{equation}\label{eq:detection}
\det(\ell_i,j(f)\ell_k)=0
\quad\Longleftrightarrow\quad
i=k\ \text{and}\ f\in\langle c_i\rangle.
\end{equation}

\section{Matching the peripheral representations}

Assume the hypotheses of Theorem~\ref{thm:product}.
We first construct the vertex representations and gluing matrices implementing
the edge identifications.
In the next section we deform the gluing matrices and prove faithfulness.

The decomposition of $M$ gives a graph of groups with vertex groups
$H_v=F_v\times\langle z_v\rangle$, where $F_v=\pi_1\Sigma_v$.
For an oriented edge $e:v\to w$, let $\bar e$ denote its reverse.
Let $G_e=G_{\bar e}$ be the torus group $\Z^2$, with inclusion
$\iota_e:G_e\to H_v$ and image
\[
P_e=\iota_e(G_e)=\langle c_e,z_v\rangle.
\]
Here the $c_e$ are the corresponding boundary loops, chosen from
standard surface presentations of the $F_v$.
We represent elements of $\pi_1M$ by loop words at a fixed base
vertex, with vertex-group elements inserted along the path.
The relations are the vertex-group relations and
\[
e\bar e=1,\qquad \iota_e(g)e=e\iota_{\bar e}(g)\quad(g\in G_e).
\]
A \emph{pinch} is a subword $e h\bar e$ with $h\in P_{\bar e}$.

Transporting the neighboring fiber $z_w$ across the gluing gives
$c_e^{p_e}z_v^{q_e}$. The fiber slopes are distinct, so $p_e\ne0$.
Let $r_e=-q_e/p_e$.
For $c_e^s z_v^t\in P_e$, the coefficients $s$ and $r_es+t$ vanish
on the local and neighboring fiber, respectively. We use them to
make each fiber act trivially on one summand and nontrivially on the other.

\subsection{The vertex representation}

For each surface choose $j_v$ and cusp vectors $\ell_e$ as in
Lemma~\ref{lem:holonomy}. Let $U_v=\Q^2$ and $A_e=j_v(c_e)-I$.
Thus $A_e^2=0$.

At a fixed vertex, suppress the vertex subscript and enumerate the
boundary loops as $c_1,\ldots,c_m$, with assigned values
$r_1,\ldots,r_m$. Using $[a,b]=aba^{-1}b^{-1}$ we have the usual presentation:
\[
F=\langle a_1,b_1,\ldots,a_g,b_g,c_1,\ldots,c_m
\ \mid \  [a_1,b_1]\cdots[a_g,b_g]\cdot c_1\cdots c_m=1 \rangle.
\]
%
Let $W=\Q^3$ have basis $w_1,w_2,w_3$. Let $E_{ij}$ be the matrix
whose only nonzero entry is a $1$ in position $(i,j)$.
Define a Heisenberg representation $\eta:F\to\GL(W)$ by
\[
\eta(c_i)=I+r_iE_{13},\qquad
\eta(a_1)=I-\Bigl(\sum_i r_i\Bigr)E_{12},\qquad
\eta(b_1)=I+E_{23},
\]
with the other handle generators trivial. The identity
$[I+sE_{12},I+E_{23}]=I+sE_{13}$ verifies the surface relation:
one handle absorbs the sum of the boundary values.
The image of $\eta$ commutes with $E_{13}$ and fixes the column $w_1$ and
the row $w_3^*$, which reads the third coordinate.

On $V_v=U_v\oplus W$ define
\[
\rho_v(fz_v^n)=j_v(f)\oplus\bigl(\eta_v(f)(I+nE_{13})\bigr).
\]
This is a representation because $\eta_v(F_v)$ commutes with $E_{13}$ and
$E_{13}^2=0$. It is faithful as the first summand detects $f$, and then the
second detects $n$. Its peripheral restrictions are
\[
\rho_v(c_e^s z_v^t)
=(I+sA_e)\oplus\bigl(I+(r_es+t)E_{13}\bigr).
\]

\subsection{The gluing maps}

The local fiber acts on $W$ by $I+E_{13}$ and fixes $U_v$, and the neighboring
fiber acts on $U_v$ by $I+p_eA_e$ and fixes $W$.
We choose a basis of $U_v$ that matches its unipotent block to the
block on $\langle w_1,w_3\rangle$.

Regard vectors as columns and covectors as rows: $x\phi$ is the
matrix of the map $y\mapsto\phi(y)x$, whereas $\phi x$ is a scalar.
Since $\operatorname{im}A_e=\ker A_e=\Q\ell_e$, there is a unique
nonzero row $\beta_e\in U_v^*$ with
\[
p_eA_e=\ell_e\beta_e.
\]
It satisfies $\beta_e\ell_e=0$, and both $\ell_e$ and $\beta_e$
are fixed by $j_v(c_e)$. Choose $\ell'_e\in U_v$ with $\beta_e\ell'_e=1$.
Then $(I+p_eA_e)\ell'_e=\ell'_e+\ell_e$, just as
$(I+E_{13})w_3=w_3+w_1$.

For $e:v\to w$, define $C_e:V_w\to V_v$ by its action on a basis:
\[
\begin{array}{c|ccccc}
\text{vector in }V_w&\ell_{\bar e}&\ell'_{\bar e}&w_1&w_3&w_2\\
\hline
\text{image in }V_v&w_1&w_3&\ell_e&\ell'_e&w_2
\end{array}
\]
Thus $C_e$ exchanges the two unipotent blocks and fixes the remaining
coordinate. It satisfies $C_{\bar e}=C_e^{-1}$ and matches both fiber
actions: each fiber is local on one side and neighboring on the other.

The two fibers generate a finite-index subgroup of $G_e$.
Every peripheral matrix is $I+D$ with $D^2=0$. 
Since $(I+D)^k=I+kD$, such matrices are determined by any
fixed positive power.
Compatibility on the two fibers therefore implies
compatibility on the whole torus group:
\begin{equation}\label{eq:intertwiner}
\rho_v(\iota_e(a))C_e
=C_e\rho_w(\iota_{\bar e}(a))\qquad(a\in G_e).
\end{equation}

\section{Distinguishing reduced words}\label{sec:faithfulness}

The degree of a nonzero polynomial matrix is the largest degree
of its entries. Write $[u^k]A$ for the coefficient matrix of $u^k$.
When $\deg A=k$, this is its \emph{leading coefficient}.

\begin{lemma}[Degree growth]\label{lem:degree}
Let $A,B\in\M_d(\Q[u])$ have degrees $k$ and $1$, with leading
coefficients $a\phi$ and $b\psi$, respectively, where $a,b$ are
nonzero columns and $\phi,\psi$ are nonzero rows.
If $M\in\M_d(\Q)$ satisfies $\phi Mb\ne0$, then $AMB$ has
degree $k+1$ and leading coefficient $a(\phi Mb)\psi$.
\end{lemma}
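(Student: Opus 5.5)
We expand both factors by degree and multiply out. Write $A=\sum_{i=0}^{k}u^iA_i$ with $A_k=a\phi$, and $B=B_0+uB_1$ with $B_1=b\psi$. Here $A_i,B_i\in\M_d(\Q)$, and $M$ is a constant matrix. Then
\[
AMB=\sum_{i=0}^{k}\sum_{j=0}^{1}u^{i+j}A_iMB_j .
\]
Every term has degree at most $k+1$. The only term contributing to $u^{k+1}$ is $i=k$, $j=1$. Hence $\deg(AMB)\le k+1$ and
\[
[u^{k+1}](AMB)=A_kMB_1=(a\phi)M(b\psi).
\]

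Next we simplify this product of rank-one matrices using associativity, following the paper's convention that a row times a column is a scalar:
\[
(a\phi)M(b\psi)=a\,(\phi Mb)\,\psi .
\]
The factor $\phi Mb\in\Q$ is nonzero by hypothesis.

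It remains to check that the leading coefficient is nonzero, since the degree claim needs this. The matrix $a\psi$ is the outer product of a nonzero column and a nonzero row. If $a_r\ne0$ and $\psi_s\ne0$, its $(r,s)$ entry is $a_r\psi_s\ne0$. Multiplying by the nonzero scalar $\phi Mb$ keeps that entry nonzero. So the coefficient of $u^{k+1}$ is nonzero, and therefore $\deg(AMB)=k+1$ with leading coefficient $a(\phi Mb)\psi$.

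There is no real obstacle here. The one point to state explicitly is that the top coefficient cannot cancel. This follows because only a single pair $(i,j)$ reaches degree $k+1$, and $\Q$ has no zero divisors, so a product of nonzero scalars is nonzero.
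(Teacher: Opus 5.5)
Your proof is correct and follows the paper's argument: only the product of the two leading coefficients reaches degree $k+1$, and $(a\phi)M(b\psi)=a(\phi Mb)\psi\ne0$. You also spell out why a nonzero scalar multiple of the outer product $a\psi$ is nonzero, which the paper leaves implicit.
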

\begin{proof}
Only the leading terms contribute to degree $k+1$, and
\[
[u^{k+1}](AMB)
=(a\phi)M(b\psi)=a(\phi Mb)\psi\ne0.
\qedhere
\]
\end{proof}

The new leading coefficient is again rank one, so Lemma~\ref{lem:degree} can be
applied successively.

\begin{proof}[Proof of Theorem~\ref{thm:product}]
We deform each gluing matrix by a shear that commutes with the
peripheral group.
For an edge $e$ starting at $v$, define a column
$x_e\in V_v$ and a row $\phi_e\in V_v^*$ by
\[
x_e=\ell_e+w_1,\qquad \phi_e=\beta_e+w_3^*,
\]
using the direct sum $V_v=U_v\oplus W$. Both are fixed by $\rho_v(P_e)$.
Taking sums makes them compatible with the exchange of the two blocks:
\begin{equation}\label{eq:transport}
\phi_e x_e=0,\qquad C_e x_{\bar e}=x_e,\qquad \phi_e C_e=\phi_{\bar e}.
\end{equation}
For edges $e,d$ starting at $v$ and $h=fz_v^n\in H_v$, the Heisenberg
summand fixes $w_1$ and $w_3^*w_1=0$, so
\[
\phi_e\rho_v(h)x_d=\beta_ej_v(f)\ell_d.
\]
Since $\ker\beta_e=\Q\ell_e$, 
Equation~\eqref{eq:detection} gives
\begin{equation}\label{eq:transition}
\phi_e\rho_v(h)x_d=0
\quad\Longleftrightarrow\quad e=d\ \text{and}\ h\in P_e.
\end{equation}
For a subword $e h d$ passing through $v$, the relevant pairing is
$\phi_{\bar e}\rho_v(h)x_d$, and it vanishes exactly when $e h d$ is a pinch.

Choose signs $\varepsilon_e\in\{\pm1\}$ with
$\varepsilon_{\bar e}=-\varepsilon_e$, and let
\[
X_e \ = \ (I+\varepsilon_eu x_e\phi_e)C_e
   \ = \ C_e+\varepsilon_eu x_e\phi_{\bar e}.
\]
The shear commutes with $\rho_v(P_e)$ and has inverse
$I-\varepsilon_eu x_e\phi_e$, since $\phi_e x_e=0$.
Equations \eqref{eq:transport} and \eqref{eq:intertwiner} therefore give
\[
X_{\bar e}=X_e^{-1} \ \text{ and } \ 
\rho_v(\iota_e(g))X_e=X_e\rho_w(\iota_{\bar e}(g))
\ \text{ for } \ g\in G_e.
\]
Thus the vertex and edge matrices respect the path relations. Their
products along loops at a fixed base vertex define
$\rho:\pi_1M\to\GL_5(\Q[u])$.

Let $g\ne1$ and choose a based path word with the fewest edge traversals:
\[
g=h_0e_1h_1\cdots e_nh_n,\qquad
e_i:v_{i-1}\to v_i,\quad h_i\in H_{v_i},\quad v_n=v_0.
\]
If $n=0$, vertex-group faithfulness gives $\rho(g)\ne I$.
Suppose $n>0$. Minimality excludes pinches, so \eqref{eq:transition} gives
\[
\phi_{\bar e_i}\rho_{v_i}(h_i)x_{e_{i+1}}\ne0\qquad(1\le i<n).
\]
Omit the invertible constant matrices at the two ends, which does not
change degree. We prove inductively that the product through $X_{e_i}$
has degree $i$ and leading coefficient a nonzero scalar multiple of
$x_{e_1}\phi_{\bar e_i}$.
The case $i=1$ follows from the formula for $X_{e_1}$.
The induction step follows from Lemma~\ref{lem:degree} and the
nonzero pairings above.
Thus $\deg\rho(g)=n>0$, so $\rho(g)\ne I$.

Evaluation at a real number transcendental over $\Q$
embeds $\Q[u]$ in $\R$ and preserves faithfulness.
\end{proof}

%
\section{Finite covers and consequences}\label{sec:manifolds}

\begin{lemma}[Finite-cover reduction]\label{lem:cover}
Every closed connected irreducible non-geometric graph
manifold has a finite cover satisfying the hypotheses of
Theorem~\ref{thm:product}.
\end{lemma}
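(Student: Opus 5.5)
We reduce to Theorem~\ref{thm:product} in three successive covers, each preserving the properties obtained at the previous stage. Let $N$ be closed, irreducible, non-geometric, and a graph manifold. Then the JSJ decomposition is nonempty, and every JSJ piece is Seifert fibered with hyperbolic base orbifold. Non-geometricity rules out Sol and the twisted $I$-bundle over the Klein bottle cases. Adjacent fibers are not isotopic on a JSJ torus, since otherwise the pieces would amalgamate into one Seifert piece. These properties pass to finite covers: JSJ tori lift to JSJ tori, pieces lift to Seifert pieces with fibers lifting to fibers, and distinct fiber slopes remain distinct. Each subsequent step therefore only needs to improve the pieces.

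First, pass to a finite cover in which every piece is a product $\Sigma_v\times S^1$ with $\Sigma_v$ orientable. I would take the orientation double cover, so that $N$ is orientable and every JSJ torus is two-sided. Then I would invoke the standard fact, used for instance by Kapovich--Leeb, Luecke--Wu and Przytycki--Wise, that a graph manifold has a finite cover in which every piece is $\Sigma\times S^1$ with $\Sigma$ orientable, of negative Euler characteristic, and with nonempty boundary. The construction goes piece by piece. Each base orbifold is hyperbolic, so it has a finite-index torsion-free orientable surface subgroup, and this gives a finite product cover of each piece. These local covers can be taken characteristic enough, namely of equal degree on the boundary tori, that they glue. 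The tool for this gluing is the standard lemma that, for a graph of groups with $\Z^2$ edge groups, compatible finite-index "boundary" subgroups can be realized, since vertex groups are residually finite and each peripheral $\Z^2$ is separable. Concretely, choose $K$ such that every piece has a cover that is a product and restricts on each boundary torus to the characteristic $K$-cover of that torus. Copies of these covers then glue along matching tori to give a finite cover of $N$.

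Second, arrange that each $\Sigma_v$ has positive genus and a positive even number of boundary components. Both can be done in the surface factor while keeping the same boundary data. Precomposing with a further cover of each $\Sigma_v$ that is trivial, say of degree $2$ on boundary circles, would break the uniform gluing. So instead I would replace $K$ by $2K$ and again use the product structure. A $\Sigma$ with $\chi<0$ and boundary has covers of every degree with prescribed boundary behavior, and a degree-$2$ cover unwrapping each boundary circle doubles the count of neither. The better tool is to take a double cover of the whole manifold, via a homomorphism $\pi_1N\to\Z/2$, that is trivial on every fiber and on every JSJ torus but nontrivial on some nonperipheral surface class. This doubles the number of boundary components of each piece and makes it even. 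Positive genus then follows from $\chi<0$ after a further such cover, since planar surfaces acquire genus in suitable covers. The existence of such characters is the main obstacle. I would try to get it by first taking a cover with $b_1$ large in each piece, using homology of $\Sigma_v$ relative to its boundary, and then using Mayer--Vietoris to extend a character killing all JSJ tori from one piece across the graph.

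Third, check the hypotheses of Theorem~\ref{thm:product}: closed, decomposed along tori into products $\Sigma_v\times S^1$ with $\Sigma_v$ orientable of positive genus and a positive even number of boundary components, and distinct fiber slopes on each gluing torus. The last property was preserved throughout, which completes the reduction.
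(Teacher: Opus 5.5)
Your first step is what the paper does: it simply cites Kapovich--Leeb, Lemma~2.1. That lemma, however, gives bases of genus at least two, and you need this stronger conclusion, not merely $\chi<0$.

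\textbf{The gap is in your second step.} You arrive at the right tool, a double cover from a character $\pi_1N\to\Z/2$ vanishing on fibers and JSJ tori. But you leave its existence as ``the main obstacle'' with only a tentative plan, and you state the requirement as nontriviality on ``some'' surface class, extended ``from one piece.''

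\emph{The character must restrict nontrivially to every vertex group.} On a piece where it is trivial, the piece lifts to two copies of itself, each with the same, possibly odd, number of boundary circles.

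\emph{Existence is immediate, with no auxiliary covers or growth of $b_1$.} The character is required to kill every edge group, so every edge relation holds automatically once the stable letters are sent to $0$. The character can therefore be prescribed independently on each vertex group. On $F_v\times\langle z_v\rangle$, send one handle generator to $1$, and send all other handle generators, all boundary loops, and $z_v$ to $0$; the surface relation holds automatically in the abelian target. In the resulting double cover, each piece becomes $\widetilde\Sigma_v\times S^1$, with every boundary circle lifting twice by degree one. Genus $g$ with $m$ boundary circles becomes genus $2g-1$ with $2m$ boundary circles, and the fiber slopes stay distinct.

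\textbf{Your ordering fails for planar pieces.} The construction needs $g\ge1$ in every piece before the character is chosen. Doing evenness first and getting genus afterwards ``after a further such cover'' does not work. On a planar base the boundary loops generate $H_1(\Sigma_v)$, so every character vanishing on them is trivial there. A pair-of-pants piece, for example, is merely duplicated by any such cover: it neither gets an even number of boundary circles nor acquires genus. Creating genus in a planar piece requires covers that unwrap boundary circles, and these must be matched across the JSJ tori. That matching is exactly what Kapovich--Leeb's genus $\ge2$ conclusion packages. So take positive genus from your first step, and then apply the piecewise character.
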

\begin{proof}
Kapovich-Leeb \cite[lem~2.1]{KL} give a finite cover whose Seifert
pieces are products $\Sigma_v\times S^1$, with each $\Sigma_v$
orientable of genus at least two and with nonempty boundary. The incident fiber slopes are distinct.
It remains to arrange that each $\Sigma_v$ has an even number of
boundary components.

Map each vertex group to $\Z/2$ by sending one handle generator to $1$
and all other handle generators, boundary generators, and the fiber
to $0$. Send the stable letters of a maximal-tree presentation to $0$.
The relations hold and the edge groups map trivially, so this induces a homomorphism
restricting to a surjection on each vertex group.
In its connected double cover, each piece lifts to a connected double
cover of its base times $S^1$, with two degree-one lifts of each
boundary circle. A base of genus $g$ with $m$ boundary circles lifts
to one of genus $2g-1$ with $2m$ boundary circles. Thus every piece
satisfies the required hypotheses, and the fiber slopes remain distinct.
\end{proof}

We use two closure properties. For $R=\Q[u]$ or $\R$, a faithful
representation $P\to\GL_m(R)$ induces a faithful representation
$G\to\GL_{md}(R)$ whenever $[G:P]=d<\infty$. A kernel element
fixes every coset, hence lies in $P$. Its action on the $P$-summand
is the given faithful representation, so the element is trivial.

Finite free products of f.g.\ $\R$-linear groups are
$\R$-linear. Put the entries of the generating matrices in a 
finitely generated field $\K\subset\R$. Its algebraic closure has infinite
degree over $\K$, so Marciniak's theorem \cite{Marciniak} gives a faithful representation of a free
product over $\K(t)$. Specializing $t$ to a real number transcendental
over $\K$ and iterating proves the assertion.

\begin{corollary}[Main result]\label{cor:graph}
The fundamental group of every closed graph manifold is $\R$-linear.
\end{corollary}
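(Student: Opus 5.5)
The plan is to reduce to cases according to the geometric structure of the closed graph manifold $M$. Then we combine Theorem~\ref{thm:product}, Lemma~\ref{lem:cover}, and the two closure properties above (finite-index induction and free products).

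\emph{Reduction to the irreducible, orientable case.} By Kneser--Milnor, $\pi_1M$ is a free product of fundamental groups of prime closed $3$-manifolds. Each prime factor is either $S^2\times S^1$ (group $\Z$) or irreducible. If $M$ is nonorientable, I would pass to the orientation double cover, which is again a closed graph manifold. Linearity of $\pi_1M$ then follows from the induction property with $d=2$. A prime factor of a graph manifold is itself a graph manifold or has a geometric piece. The free-product property then lets us assume $M$ is irreducible, provided each factor group is finitely generated and $\R$-linear. Here I will read ``graph manifold'' as in the introduction, so that $M$ is already irreducible; the free-product step then only handles any reducible variants allowed by the convention.

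\emph{Geometric case.} If $M$ is irreducible and geometric, then $M$ carries one of Thurston's eight geometries. For a closed graph manifold these are the Seifert fibered geometries and Sol. In each case $\pi_1M$ is known to be linear, as recalled in the introduction. Concretely, $\pi_1M$ is virtually a lattice in a linear Lie group or virtually a central extension of a surface group, and those are known to embed in $\GL_n(\R)$. I would simply cite this.

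\emph{Non-geometric case.} Lemma~\ref{lem:cover} gives a finite cover $\widehat M\to M$ satisfying the hypotheses of Theorem~\ref{thm:product}. That theorem gives a faithful $\pi_1\widehat M\to\GL_5(\R)$. Since $[\pi_1M:\pi_1\widehat M]=d<\infty$, the induced representation gives $\pi_1M\hookrightarrow\GL_{5d}(\R)$.

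\emph{Main obstacle.} The only step needing care is the geometric case. There one must confirm that linearity of every closed Seifert fibered and Sol manifold group is indeed covered by the known results. Nil and $\widetilde{\SL_2}$ groups, for instance, are not lattices in linear Lie groups directly. I would handle these by passing to a finite-index subgroup that is a central extension of a surface group or of $\Z^2$. Such a group embeds in a Heisenberg-type unipotent group or in a product of that with a Fuchsian representation, in the style of $\rho_v$ above. I would then induce up to $\pi_1M$.
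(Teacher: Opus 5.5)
Your proposal is correct and is essentially the paper's proof: cite the geometric case, and otherwise combine Lemma~\ref{lem:cover}, Theorem~\ref{thm:product}, and induction from the finite-index subgroup to get $\pi_1M\hookrightarrow\GL_{5d}(\R)$. Your orientation and prime-decomposition reductions are unnecessary, since graph manifolds are irreducible by definition, Lemma~\ref{lem:cover} assumes no orientability, and the paper makes such reductions only in Corollary~\ref{cor:all}; your aside that Nil groups are not lattices in a linear Lie group is inaccurate, since the Heisenberg group is linear, but neither point affects the argument.
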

\begin{proof}
The geometric case is already known.
Otherwise, Lemma~\ref{lem:cover} and
Theorem~\ref{thm:product} give a finite cover with
$\R$-linear fundamental group.
Linearity passes to finite extensions.
\end{proof}

\begin{corollary}\label{cor:all}
The fundamental group of every compact connected $3$-manifold is
$\R$-linear.
\end{corollary}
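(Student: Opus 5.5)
The plan is to reduce an arbitrary compact connected $3$-manifold $N$ to the cases already handled: closed geometric manifolds, the mixed and bounded cases of Przytycki--Wise, and the closed graph-manifold case of Corollary~\ref{cor:graph}. Throughout we use the two closure properties just proved: passage to finite extensions, and finite free products of finitely generated $\R$-linear groups.

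\textbf{Step 1: reductions.} Since $\pi_1 N$ is finitely generated, we may pass to the orientation double cover and assume $N$ is orientable; $\R$-linearity then returns through the index-$2$ inclusion. We cap off any sphere boundary components with balls, which does not change $\pi_1$. By the Kneser--Milnor prime decomposition, $\pi_1 N$ is a finite free product of fundamental groups of prime manifolds. Each free factor is a finitely generated $3$-manifold group, so by the free-product closure property it suffices to treat a prime $N$. If $N$ is $S^2\times S^1$, then $\pi_1 N=\Z$, which is trivially linear. Otherwise $N$ is irreducible.

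\textbf{Step 2: irreducible case.} First, if $N$ has nonempty boundary that is not a union of tori, it is standard that $\pi_1 N$ embeds in the fundamental group of an irreducible manifold with torus boundary or empty boundary. Alternatively, one can use that such groups are virtually special (Przytycki--Wise, or Wise's hierarchy work), hence $\Z$-linear. Either way, we may assume $\partial N$ consists of tori or is empty.

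Next, apply geometrization together with the JSJ decomposition. If $N$ is geometric, linearity is known. If $N$ has a hyperbolic JSJ piece and is not geometric, it is mixed, and \cite[cor~1.2]{PWmixed} gives $\Z$-linearity. If all pieces are Seifert fibered and $\partial N\neq\emptyset$, then $N$ is a graph manifold with boundary and \cite{PWboundary} applies. The remaining case is a closed graph manifold, which is exactly Corollary~\ref{cor:graph}.

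\textbf{Step 3: assembly.} In every case, $\pi_1$ of each prime factor is $\R$-linear; note that $\Z$-linear implies $\R$-linear. The free-product closure and the finite-extension closure then yield $\R$-linearity of $\pi_1 N$.

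The main obstacle is bookkeeping rather than new mathematics. The delicate points are the non-toral boundary case and checking that each cited result covers exactly the relevant class of manifolds, including the finite-volume hyperbolic and non-orientable situations. I would handle non-toral boundary first by doubling along the non-torus boundary components. The resulting manifold is irreducible with incompressible-type control and contains $\pi_1 N$ via a retraction argument. Failing that, I would cite virtual specialness of such groups directly.
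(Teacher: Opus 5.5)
Your ingredients are the paper's: the fold retraction of a double, the orientation cover, the prime decomposition with free-product closure, and the geometric/mixed/closed-graph trichotomy. However, the order in which you apply them produces a false step.

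After you have already reduced to an irreducible $N$, you double along the non-torus boundary components and assert that the result is irreducible. This fails whenever that boundary is compressible. For example, a genus-$g$ handlebody is irreducible, but its double is $\#_g(S^2\times S^1)$. For the same reason, the claim that $\pi_1N$ embeds in the group of an \emph{irreducible} manifold with toral or empty boundary is not supported by the doubling construction.

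Your hedge also has a problem. Citing ``virtual specialness'' for non-toral boundary needs a precise reference, since \cite{PWmixed} and \cite{PWboundary} treat only empty or toral boundary.

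The repair uses only tools you already have. Irreducibility of the double is never needed, because the fold retraction makes $\pi_1N\to\pi_1(\text{double})$ injective for any double along any union of boundary components. So double first, and then run your Step~1 on the double. Its prime factors again have only toral or empty boundary, and Step~2 applies to each.

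The paper does exactly this, doubling along all of $\partial N$ at the outset. Only closed orientable irreducible manifolds then remain. That ordering removes your separate non-toral case, and also your appeals to \cite{PWboundary} and to the cusped geometric and mixed cases. The only inputs left are the closed geometric case, \cite{PWmixed}, and Corollary~\ref{cor:graph}.
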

\begin{proof}
A compact manifold with nonempty boundary is a retract of its
closed double. By finite-extension closure, it therefore suffices
to consider closed orientable manifolds.
Prime decomposition and free-product closure reduce further to
irreducible manifolds, since $S^2\times S^1$ has cyclic fundamental
group.
The geometric cases are already known.
The remaining cases are graph manifolds or mixed.
Corollary~\ref{cor:graph} handles the former, and the latter have
$\Z$-linear fundamental groups by \cite[cor~1.2]{PWmixed}.
\end{proof}

\begin{remark}\label{rem:SLZ}
As in \cite{Wise_Linear_Tubular}, with a bit more effort we can embed in $SL_n(\Z[u])$.
\end{remark}

\section{An example}
\label{sec:example}

Let $M$ be obtained by gluing the two boundary tori of
$\Sigma_{1,2}\times S^1$, where $\Sigma_{1,2}$ is a genus-one surface
with two boundary components. Its vertex group and gluing relations are
\[
\begin{aligned}
H&=\langle a,b,c_1,c_2,z\mid
 [a,b]c_1c_2=1,\ [a,z]=[b,z]=[c_1,z]=[c_2,z]=1\rangle,\\
G=\pi_1M&=\langle H,t\mid
 t c_2t^{-1}=c_1^{-1},\quad tzt^{-1}=c_1z\rangle.
\end{aligned}
\]
Equivalently, letting $c=c_1$ and $d=t^{-1}$ gives
\[
G=\langle a,b,c,d,z\mid [a,b][c,d]=1,\
 zaz^{-1}=a,\ zbz^{-1}=b,\ zcz^{-1}=c,\ zdz^{-1}=dc\rangle.
\]
Thus $M$ is the mapping torus of a nonseparating Dehn twist on a closed
genus-two surface. In the notation of \cite{DoubaVU}, its gluing matrix is
$\left(\begin{smallmatrix}1&1\\0&-1\end{smallmatrix}\right)$.
The criterion in \cite[thm~2.2]{DoubaVU} shows $M$ is not nonpositively curved,
so $G$ is not virtually special by \cite[thm~1.1]{Liu}.

Here the parameters at both ends of the edge are $p=q=1$ and $r=-1$.
For the surface matrices displayed below, choose
\[
\begin{aligned}
\ell_1&=(6,0)^{\mathsf t},&
\ell'_1&=(0,-1)^{\mathsf t},\\
\ell_2&=(0,6)^{\mathsf t},&
\ell'_2&=(1,0)^{\mathsf t}.
\end{aligned}
\]
Taking $\varepsilon_t=1$, the construction gives the following faithful
representation $\rho:G\to\SL_5(\Q[u])$.

Let $A=\rho(a)$, $B=\rho(b)$, $C_i=\rho(c_i)$,
$Z=\rho(z)$, and $T=\rho(t)$, and let $E_{ij}$ denote the usual
matrix units in $\M_5(\Z)$. Then
\[
\begin{gathered}
\begin{aligned}
A&=\begin{pmatrix}-1&-3\\3&8\end{pmatrix}
   \oplus\begin{pmatrix}1&2&0\\0&1&0\\0&0&1\end{pmatrix},
&
B&=\begin{pmatrix}4&5\\-1&-1\end{pmatrix}
   \oplus\begin{pmatrix}1&0&0\\0&1&1\\0&0&1\end{pmatrix},
\end{aligned}\\[6pt]
\begin{aligned}
C_1&=I_5-6E_{12}-E_{35},\\[4pt]
C_2&=I_5+6E_{21}-E_{35},\\[4pt]
Z&=I_5+E_{35},
\end{aligned}
\qquad
T=\begin{pmatrix}
6u&0&6&0&6u\\
0&0&0&0&-1\\
u&1/6&0&0&u\\
0&0&0&1&0\\
1&0&0&0&0
\end{pmatrix}.
\end{gathered}
\]

To illustrate the degree growth, consider the pinchless word
\[
w=[tat^{-1},b]=tat^{-1}bt a^{-1}t^{-1}b^{-1}.
\]
Introduce the columns and rows
\[
\begin{aligned}
x&=(6,0,1,0,0)^{\mathsf t},&
y&=(0,6,1,0,0)^{\mathsf t},\\
\alpha&=(1,0,0,0,1),&
\beta&=(0,-1,0,0,1).
\end{aligned}
\]
Then
\[
[u]T=x\alpha,\qquad [u]T^{-1}=-y\beta.
\]
The three intervening pairings are
\[
\alpha Ay=-18,\qquad \beta Bx=6,\qquad \alpha A^{-1}y=18.
\]
Let $P_k$ be the matrix of the prefix ending at the $k$-th occurrence of
$t^{\pm1}$ in $w$. We then have:
\[
\begin{array}{c|cccc}
k&1&2&3&4\\\hline
\deg P_k&1&2&3&4\\
{[u^k]P_k}&x\alpha&18x\beta&108x\alpha&-1944x\beta
\end{array}
\]
Finally, $\rho(w)=P_4B^{-1}$ and
\[
[u^4]\rho(w)
\ = \ -1944x\beta B^{-1}
\ = \ 1944x(1,4,0,0,-1).
\]
Thus $\deg\rho(w)=4$, exactly its number of edge traversals.

\section*{Tool Use}
The author used AI tools to write Python code to
explore representations, to extract Lemma~\ref{lem:degree},
which simplified the proof of Theorem~\ref{thm:product},
to generate the example in Section~\ref{sec:example},
to double-check Remark~\ref{rem:SLZ},
to help identify additional references, to polish the exposition,
to computationally stress-test the proofs, and to help write
this declaration. The author takes full responsibility for
the mathematical arguments and final text.

\end{document}